\newtheorem{theorem}{Theorem}
\newtheorem{lemma}[theorem]{Lemma}
\newtheorem{proposition}[theorem]{Proposition}
\newenvironment{proof}[1][Proof]{\noindent\textbf{#1.} }{\ \rule{0.5em}{0.5em}}
\begin{document}

\title{Variations and extensions of the Gaussian concentration inequality, Part II}
\author{Daniel J. Fresen\thanks{%
University of Pretoria, Department of Mathematics and Applied Mathematics,
daniel.fresen@up.ac.za}}
\date{}
\maketitle

\begin{abstract}
We prove concentration inequalities for $f\left( X\right) $ about its median, where $X$ is a random vector in $\mathbb{R}^n$ with independent heavy tailed coordinates of Weibull or power type, and $f:\mathbb{R}^n\rightarrow\mathbb{R}$ is a locally Lipschitz function. This paper is part of a series of four papers, Part I, Part II and two supporting papers. It can be read independently of Part I.
\end{abstract}

\tableofcontents

\section{Results}

Let $Lip\left( T,x\right) $ denote the local Lipschitz constant of
a function around a point $x$,%
\begin{equation}
Lip\left( T,x\right) =\lim_{\varepsilon \rightarrow 0^{+}}Lip\left(
T|_{B\left( x,\varepsilon \right) }\right)  \label{loc lip def}
\end{equation}%
Associated to a real valued random variable $X_1$ are its distribution, CDF and quantile function defined as
\begin{eqnarray}
\mu_1(E)&=&\mathbb{P}\{X_1\in E\}, E\in\mathcal{B}(\mathbb{R})\nonumber\\
F_1(t)&=&\mathbb{P}\{X_1\leq t\}, t\in\mathbb{R}\nonumber\\
F_1^{-1}(s)&=&\inf \left\{ t\in \mathbb{R}:F_1(t)\geq s\right\}, s\in(0,1)\label{gen inv def 2016}
\end{eqnarray}%
where $\mathcal{B}(\mathbb{R})$ is the Borel sigma algebra. If $X_1 $ has a continuous
density $h=d\mu_1 /dx$ then for all points $s\in (0,1)$ and $t\in \mathbb{R}$,%
\begin{equation*}
Lip\left( F_1^{-1},s\right) =\frac{1}{h\left( F_1^{-1}\left( s\right) \right) }%
\hspace{0.65in}Lip\left( F_1^{-1}\circ \Phi ,t\right) =\frac{\phi \left(
t\right) }{h\left( F_1^{-1}\left( \Phi \left( t\right) \right) \right) }
\end{equation*}%
where we interpret $a/0=\infty$ for $a>0$,
\[
\phi(t)=\frac{1}{\sqrt{2\pi}}e^{-t^2/2}\hspace{1cm} and \hspace{1cm} \Phi(t)=\int_{-\infty}^t\phi(u)du
\]
For a differentiable function $f:\mathbb{R}%
^{n}\rightarrow \mathbb{R}$ and $1\leq s\leq \infty $ set%
\begin{equation}
Lip_{s}\left( f\right) =\sup_{x\in \mathbb{R}^{n}}\left\vert \nabla
f(x)\right\vert _{s}\hspace{0.65in}Lip_{s}^{\sharp }\left( f\right) =\left(
\sum_{i=1}^{n}\sup_{x\in \mathbb{R}^{n}}\left\vert \frac{\partial f}{%
\partial x_{i}}(x)\right\vert ^{s}\right) ^{1/s}  \label{measuring Lip p}
\end{equation}%
with the usual interpretation (involving $\max $) when $s=\infty $. In
general, $Lip_{s}\left( f\right) \leq Lip_{s}^{\sharp }\left( f\right) $.
There are two special cases when $Lip_{s}\left( f\right) =Lip_{s}^{\sharp
}\left( f\right) $. The first is when $f$ is linear. The second is when $%
s=\infty $. By considering a path integral, and a local linear
approximation, it follows that $Lip_{s}\left( f\right) $ is the Lipschitz
constant of $f$ with respect to the $\ell _{s^{\ast }}^{n}$ norm on $\mathbb{%
R}^{n}$, where $s^{\ast }=s/\left( s-1\right) $ when $1<s\leq \infty $
(setting $\infty /\left( \infty -1\right) :=1$).

In both Theorems \ref{conc sum weibull type} and \ref{poly tails Lip} below, the assumption of differentiability is a convenience rather than a necessity.

\subsection{Weibull tails}

\begin{theorem}[Weibull type tails $\sim \exp \left( -\left\vert
t\right\vert ^{q}\right) $, $0<q<1$]
\label{conc sum weibull type}Let $n\in \mathbb{N}$, $0<q<1$, and let $\left(
\mu _{i}\right) _{1}^{n}$ be a sequence of probability measures on $\mathbb{R%
}$, each with corresponding cumulative distribution $F_{i}$ and quantile
function $F_{i}^{-1}$ as in (\ref{gen inv def 2016}) such that for all $s\in 
\mathbb{R}$,%
\begin{equation}
Lip\left( F_{i}^{-1}\circ \Phi ,s\right) \leq \left( 1+\left\vert
s\right\vert \right) ^{-1+2/q}  \label{condition A rip}
\end{equation}%
Let $f:\mathbb{R}^{n}\rightarrow \mathbb{R}$ be differentiable and let $%
\left( X_{i}\right) _{1}^{n}$ be a sequence of independent random variables,
each with corresponding distribution $\mu _{i}$. Then for all $t>0$,%
\begin{equation}
\mathbb{P}\left\{ \left\vert f\left( X\right) -\mathbb{M}f\left( X\right)
\right\vert >t\right\} \leq 2\exp \left( -c_{q}\min \left\{ \left( \frac{t}{%
Lip_{2}^{\sharp }\left( f\right) }\right) ^{2},\left( \frac{t}{Lip_{\infty
}^{\sharp }\left( f\right) }\right) ^{q}\right\} \right)
\label{weibull variation}
\end{equation}%
and%
\begin{eqnarray}
&&\mathbb{P}\left\{ \left\vert f\left( X\right) -\mathbb{M}f\left( X\right)
\right\vert >C_{q}\left(\log \left(e+ \frac{n}{t^{-2+4/q}}\right)
\right)^{1/q-1/2} \left( tLip_{2}\left( f\right) +t^{2/q}Lip_{\infty }\left(
f\right) \right) \right\}  \notag \\
&\leq &2\exp \left( -t^{2}/2\right)  \label{nonlinear Weibull 2}
\end{eqnarray}
\end{theorem}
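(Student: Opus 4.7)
The plan follows the transportation strategy outlined in Section~\ref{intro non Gauss conc}: reduce to Gaussian concentration, apply Pisier's inequality~(\ref{Pisiers Gaussian}), and convert the resulting convex majorization into a tail bound using the comparison results of Section~\ref{removing phi}.

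\emph{Step 1 (Reduction to Gaussian).} Set $T(z) = (F_i^{-1}(\Phi(z_i)))_{i=1}^n$; then $T(Z)$ has the same distribution as $X$, so it suffices to prove both bounds with $X$ replaced by $T(Z)$ and $f$ replaced by $\psi = f\circ T$.

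\emph{Step 2 (Pisier and the gradient).} With $Z^*$ an independent copy of $Z$ and $Z'$ an independent standard normal, Pisier's inequality~(\ref{Pisiers Gaussian}) gives, for every convex $\varphi$,
\[
\mathbb{E}\varphi\bigl(\psi(Z^*)-\psi(Z)\bigr) \leq \mathbb{E}\varphi\Bigl(\tfrac{\pi}{2}\,|\nabla\psi(Z)|\,Z'\Bigr).
\]
Since $T$ is diagonal, $\partial_i\psi(z) = \partial_i f(T(z))\,T_i'(z_i)$, and by~(\ref{condition A rip}) with $|T_i'(z)|\leq(1+|z|)^{-1+2/q}$,
\[
|\nabla\psi(Z)|^2 \leq \sum_{i=1}^n |\partial_i f(T(Z))|^2\,(1+|Z_i|)^{-2+4/q}.
\]

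\emph{Step 3 (Proof of~(\ref{weibull variation}) via moments).} Let $a_i = \sup_x|\partial_i f(x)|$, so $Lip_2^\sharp(f)^2 = \sum a_i^2$ and $Lip_\infty^\sharp(f) = \max_i a_i$, and set $V_i = (1+|Z_i|)^{-2+4/q}$; these are i.i.d.\ nonnegative with Weibull tails of order $q/(2-q)\in(0,1)$. Applying Pisier with $\varphi(x)=|x|^p$ for $p\geq 2$ and taking $p$th roots gives
\[
\|\psi(Z^*) - \psi(Z)\|_p \leq \tfrac{\pi}{2}\,\|Z'\|_p\,\Bigl\|\textstyle\sqrt{\sum_i a_i^2 V_i}\,\Bigr\|_p.
\]
Using $\|Z'\|_p\lesssim\sqrt{p}$ together with a Hitczenko--Montgomery-Smith--Oleszkiewicz-type moment estimate (or, equivalently, the rearrangement inequality to reduce to a sum of order statistics of the $V_i$, followed by Abel summation against the sorted $a_i^2$ and the bounds of Corollary~\ref{robust simplified bound for sum}) should yield
\[
\Bigl\|\textstyle\sqrt{\sum_i a_i^2 V_i}\,\Bigr\|_p \leq C_q\bigl(Lip_2^\sharp(f) + Lip_\infty^\sharp(f)\,p^{1/q-1/2}\bigr),
\]
so that $\|\psi(Z^*)-\psi(Z)\|_p \leq C_q\bigl(Lip_2^\sharp(f)\sqrt{p} + Lip_\infty^\sharp(f)\,p^{1/q}\bigr)$. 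Markov's inequality with $p$ optimized against $t$ (the sub-Gaussian vs.\ Weibull($q$) regions) produces~(\ref{weibull variation}) for $\psi(Z^*)-\psi(Z)$, and a standard symmetrization (conditioning on $Z^*$ lying on the appropriate side of the median) transfers the bound to $|\psi(Z)-\mathbb{M}\psi(Z)|$.

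\emph{Step 4 (Proof of~(\ref{nonlinear Weibull 2})).} Here I would use the ordinary Lipschitz seminorms through the two alternative coarse estimates
\[
|\nabla\psi(Z)| \leq Lip_2(f)\,(1+\max_i|Z_i|)^{-1+2/q}, \qquad |\nabla\psi(Z)| \leq Lip_\infty(f)\,\sqrt{\textstyle\sum_i(1+|Z_i|)^{-2+4/q}}.
\]
The first is controlled by the extreme value $\max_i|Z_i|\sim\sqrt{\log n}$ with Gaussian deviations, contributing terms of order $Lip_2(f)\,(\log n + s^2)^{1/q-1/2}$ against the independent Gaussian factor $|Z'|$. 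The second is a sum of $n$ i.i.d.\ nonnegative heavy-tailed summands, controlled via Corollary~\ref{robust simplified bound for sum} or Proposition~\ref{final productiones}. Taking the better of the two bounds in the appropriate regime of $t$ produces the additive structure $tLip_2(f) + t^{2/q}Lip_\infty(f)$ together with the logarithmic prefactor $(\log(e + n/t^{-2+4/q}))^{1/q-1/2}$, whose argument $nt^{2-4/q}$ encodes the crossover between the two regimes. Pisier followed by Proposition~\ref{Gaussian v} applied with the resulting $Q(t)$, and then symmetrization, yields~(\ref{nonlinear Weibull 2}).

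\emph{Main obstacle.} The crux is Step~3: obtaining a moment estimate on $\sqrt{\sum_i a_i^2 V_i}$ with exactly the norms $Lip_2^\sharp(f)$ and $Lip_\infty^\sharp(f)$, and not an intermediate $\ell_p$-norm of the coefficients. A naive Rosenthal-type estimate applied to $\sum_i a_i^2 V_i$ produces $|a|_4^2$ and $|a|_\infty^2$ in place of what is needed; one must exploit the i.i.d.\ structure of the $V_i$ more carefully, either via the rearrangement-plus-order-statistics route above, or via a sharper moment inequality tailored to the Weibull range. A secondary difficulty in Step~4 is identifying the precise optimization that produces the specific form of the logarithmic prefactor.
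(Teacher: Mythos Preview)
Your Steps 1--2 match the paper exactly. The substantive differences are in Steps 3 and 4.

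\textbf{Step 3.} The paper does not go through moments and Markov; it obtains a tail bound on $|\nabla\psi(Z)|$ directly and then invokes Proposition~\ref{Gaussian v}. The obstacle you identify (the appearance of $|a|_4 = Lip_4^\sharp(f)$) is exactly the issue, and the paper's resolution is short: write
\[
\sum_i a_i^2 V_i \;\leq\; \sum_i a_i^2 \,\mathbb{E}V_i \;+\; \Bigl|\sum_i a_i^2\bigl(V_i - \mathbb{E}V_i\bigr)\Bigr|,
\]
note that $V_i - \mathbb{E}V_i$ has Weibull tails of index $q/(2-q)\in(0,1)$, and apply the \emph{linear} bound (\ref{Weibul dist for 0<q<=1}) to the centred sum. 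This produces terms $t^{1/2}Lip_4^\sharp(f)$ and $t^{(2-q)/q}Lip_\infty^\sharp(f)$ after the square root; the unwanted $Lip_4^\sharp$ term is then absorbed via $|a|_4 \leq |a|_2^{1/2}|a|_\infty^{1/2}$ together with $t^{1/2}\leq 1^{1/2}\cdot (t^{(2-q)/q})^{1/2}$ for $t\geq 1$ and AM--GM. So the centering step plus an appeal to the already-known linear case is all that is needed; no new moment inequality is required.

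\textbf{Step 4.} Here there is a real gap. Your two ``coarse estimates'' correspond to the H\"older pairings $(\ell_1,\ell_\infty)$ and $(\ell_\infty,\ell_1)$ applied to $\sum_i |\partial_i f|^2 V_i$, i.e.\ to the cases $k=1$ (maximum of the $V_i$) and $k=n$ (full sum) in a family indexed by $k$. The paper instead interpolates: it uses the dual pair
\[
\bigl(B_1^n\cap r^{-1}B_\infty^n,\ \mathrm{conv}(B_\infty^n, rB_1^n)\bigr),\qquad r = t^{-2+4/q},
\]
so that $|\nabla\psi(Z)|^2$ is bounded by $\max\{|\nabla f|_2^2,\, r|\nabla f|_\infty^2\}$ times the dual norm of $(V_i)$, which is essentially $r^{-1}\sum_{i=1}^{\lceil r\rceil} V^{(i)}$, the average of the top $\lceil r\rceil$ order statistics. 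Bounding this partial sum (via Lemma~\ref{replacio}) by $C_q(\log(e+n/r))^{-1+2/q}$ with probability $\geq 1-Ce^{-t^2/2}$ is precisely what produces the prefactor $(\log(e+n t^{2-4/q}))^{1/q-1/2}$; the choice $r=t^{-2+4/q}$ is what makes $\max\{|\nabla f|_2, r^{1/2}|\nabla f|_\infty\}$ become $Lip_2(f)+t^{-1+2/q}Lip_\infty(f)$ after multiplication by $t$. Your endpoints $k=1$ and $k=n$ give $(\log n + t^2)^{1/q-1/2}$ and $\sqrt{n}$-type factors respectively, and taking their minimum does not obviously recover the interpolated form; even if it did, the verification would be a case analysis you have not supplied. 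The intermediate $k\approx t^{-2+4/q}$ is the missing idea.
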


\bigskip

\textbf{Comments for Theorem \ref{conc sum weibull type}}

\bigskip

\noindent $\bullet $ \textit{Comment 1:} When $f$ is linear, say $f(x)=\sum_1^na_ix_i$, for non-zero $a\in \mathbb{R}^{n}$, and we assume without loss of generality that $\mathbb{E}X_{i}=0$,  then (\ref{weibull variation}) can be written as
\begin{equation}
\mathbb{P}\left\{ \left\vert \sum_{i=1}^{n}aX_{i}\right\vert >t\right\} \leq
2\exp \left( -c_{q}\min \left\{ \left( \frac{t}{\left\vert a\right\vert }%
\right) ^{2},\left( \frac{t}{\left\vert a\right\vert _{\infty }}\right)
^{q}\right\} \right)  \label{Weibul dist for 0<q<=1}
\end{equation}%
This is already known and holds under the slightly weaker tail assumption
\[
\mathbb{P}\left\{ \left\vert X_{i}\right\vert \geq t\right\} \leq 2\exp \left( -t^{q}\right), t>0
\]
There are several ways to show this. For example, one could use moment estimates of Hitczenko,
Montgomery-Smith and Oleszkiewicz \cite[Theorem 1.1]{HitMont1997} as modified in \cite{Fr20} (see also Lata\l a's paper \cite{Lat97}), or one could use a modified version of the Gaussian concentration inequality (either in the style of this paper, or using a method of restricting and extending functions). We refer the reader to \cite{Fr20} for a more detailed discussion of this.

\bigskip

\noindent $\bullet $ \textit{Comment 2:} A related nonlinear estimate was proved by Barthe, Cattiaux, and Roberto \cite[Example 5.4]{BaCatRo}. If $X$ has i.i.d. coordinates, each with distribution $\mu_i$ and corresponding density $d\mu_i/dx=\left(2\Gamma\left(1+1/q\right)\right)^{-1}\exp\left(-\left\vert x \right\vert^q\right)$, then using a weak Poincar\'{e} inequality they proved that
\begin{equation*}
\mathbb{P}\left\{ \left\vert f\left( X\right) -\mathbb{M}f\left( X\right)
\right\vert >t\right\} \leq 2\exp \left( -c_{q}\min \left\{ \frac{t}{Lip_{2}\left(
f\right) }\left(\log n \right) ^{1-1/q},\left( \frac{t}{%
Lip_{2}\left( f\right) }\right) ^{q}\right\} \right)
\end{equation*}%
Or equivalently,
\begin{eqnarray}
\mathbb{P}\left\{ \left\vert f\left( X\right) -\mathbb{M}f\left( X\right)
\right\vert >C_{q}\left(t^2\left( \log n \right)^{-1+1/q}+t^{2/q}\right)
Lip_{2}\left( f\right)  \right\} \leq 2\exp \left( -t^{2}/2\right)  \label{BaCatRo Weibull}
\end{eqnarray}
For $t$ below a certain value (\ref{BaCatRo Weibull}) is superior to (\ref{nonlinear Weibull 2}), however for large values of $t$ (\ref{nonlinear Weibull 2}) improves on (\ref{BaCatRo Weibull}) by replacing $Lip_{2}\left( f\right)$ with the potentially smaller quantity $Lip_{\infty}\left( f\right)$, which is the correct behaviour in the linear case.

\bigskip

\noindent $\bullet $ \textit{Comment 3:} The quantity%
\begin{equation*}
\left(\log \left(e+ \frac{n}{t^{-2+4/q}}\right)
\right)^{1/q-1/2}
\end{equation*}%
cannot be completely erased from (\ref{nonlinear Weibull 2}), otherwise this
could be written as%
\begin{equation*}
\mathbb{P}\left\{ \left\vert f\left( X\right) -\mathbb{M}f\left( X\right)
\right\vert >t\right\} \leq 2\exp \left( -c_{q}\min \left\{ \left( \frac{t}{%
Lip_{2}\left( f\right) }\right) ^{2},\left( \frac{t}{Lip_{\infty }\left(
f\right) }\right) ^{q}\right\} \right)
\end{equation*}%
which would imply that $\mathrm{Var}\left( \left\vert X\right\vert _{\infty
}\right) <C_{q}$, when in fact $\mathrm{Var}\left( \left\vert X\right\vert
_{\infty }\right) \approx \left( \log n\right) ^{-2+2/q}$. The current bound
gives the estimate $\mathrm{Var}\left( \left\vert X\right\vert _{\infty
}\right) \leq C_{q}\left( \log n\right) ^{-1+2/q}$ which is off by a factor
of $\log n$, exactly the same factor by which the classical Gaussian
concentration inequality is off by in the case when $q=2$ (specifically for
the $\ell _{\infty }^{n}$ norm). For certain functions it may be possible to
decrease the exponent of the logarithmic term, say from $1/q-1/2$ to $1/q-1$%
, using Talagrand's $L_{1}$-$L_{2}$ inequality (the Gaussian version as in 
\cite{CorLed}), or by applying the methodology presented here to $\eta \circ
f$ for an appropriate choice of $\eta $ to achieve a superconcentrated
estimate, which is a trick we have exploited in connection with Gaussian
concentration of the $\ell _{p}^{n}$ norm, see \cite{Fr22}. Or one could simply combine (\ref{nonlinear Weibull 2}) with (\ref{BaCatRo Weibull}).

\bigskip

\noindent $\bullet $ \textit{Comment 4:} In case $\left\vert \nabla f\left( X\right) \right\vert
_{s}$ is with high probability much smaller than $Lip_{s}\left( f\right) $,
including the case when $Lip_{s}\left( f\right) =\infty $ and/or when one
has a high probability bound on $\partial _{i}f\left( X\right) $, \thinspace 
$1\leq i\leq n$, one can prove variations of Theorem \ref{conc sum weibull
type} (using a similar proof) that take the distribution of these into
account.

\bigskip

\noindent $\bullet $ \textit{Comment 5:} For $q\in \left[ 1,2\right] $ related estimates can be
proved using results of Talagrand \cite[Theorem 2.4]{Tal94sup} and Gozlan 
\cite[Proposition 1.2]{Gozl}, which give bounds of the form%
\begin{equation*}
\mu ^{n}\left( A+tB_{2}^{n}+t^{2/q}B_{q}^{n}\right) \geq 1-\exp \left(
-Dt^{2}\right) :t\geq 0
\end{equation*}%
assuming that $\mu $ satisfies a type of Poincar\'{e} inequality on $\mathbb{%
R}$ with constant $C$, $\mu ^{n}$ is the $n$-fold product of $\mu $, and $D$
is a constant depending only on $C$. This includes the case $d\mu /dx=\left(
2\Gamma \left( 1+1/q\right) \right) ^{-1}\exp \left( -\left\vert
x\right\vert ^{q}\right) $. Taking $A=\left\{ x:f\left( x\right) \leq 
\mathbb{M}f\left( x\right) \right\} $, if $x\in
A+tB_{2}^{n}+t^{2/q}B_{q}^{n} $ then there exists $a\in A$, $u\in B_{2}^{n}$%
, $v\in B_{q}^{n}$ such that $x=a+tu+t^{2/q}v$, so $\left\vert f\left(
x\right) -f\left( a+tu\right) \right\vert \leq t^{2/q}Lip_{q^{\ast }}\left(
f\right) $ and $\left\vert f\left( a+tu\right) -f\left( a\right) \right\vert
\leq tLip_{2}\left( f\right) $, which implies $f\left( x\right) \leq \mathbb{%
M}f\left( x\right) +tLip_{2}\left( f\right) +t^{2/q}Lip_{q^{\ast }}\left(
f\right) $. Together with a similar lower bound,%
\begin{equation*}
\mathbb{P}\left\{ \left\vert f\left( X\right) -\mathbb{M}f\left( X\right)
\right\vert >t\right\} \leq 2\exp \left( -c_{q}\min \left\{ \left( \frac{t}{%
Lip_{2}\left( f\right) }\right) ^{2},\left( \frac{t}{Lip_{q^{\ast }}\left(
f\right) }\right) ^{q}\right\} \right)
\end{equation*}%

\subsection{Power tails}

\begin{theorem}
\label{poly tails Lip}There exists a universal constant $C>0$ such that the
following is true. Let $n\in \mathbb{N}$, $2<q<\infty $, $2q\left(
q-2\right) ^{-1}<p<\infty $, and let $\left( \mu _{i}\right) _{1}^{n}$ be a
sequence of probability measures on $\mathbb{R}$, each with corresponding
cumulative distribution $F_{i}$ and quantile function $F_{i}^{-1}$ as in (%
\ref{gen inv def 2016}) such that for all $s\in \left( 0,1\right) $,%
\begin{equation}
Lip\left( F_{i}^{-1},s\right) \leq \min \left\{ s,1-s\right\} ^{-1-1/q}
\label{Lip poly cond}
\end{equation}%
Let $f:\mathbb{R}^{n}\rightarrow \mathbb{R}$ be differentiable and let $%
\left( X_{i}\right) _{1}^{n}$ be a sequence of independent random variables,
each with corresponding distribution $\mu _{i}$. Then for all $t>0$,%
\begin{equation}
\mathbb{P}\left\{ \left\vert f\left( X\right) -\mathbb{M}f\left( X\right)
\right\vert >C_{q}t\left( Lip_{2}^{\sharp }\left( f\right) +e^{t^2/(2q)} Lip_{q}^{\sharp }\left( f\right)\right)\right\} \leq Ce^{-t^{2}/2}  \label{poly conc one}
\end{equation}%
where $C_{q}>0$ is a function of $q$, and%
\begin{equation}
\mathbb{P}\left\{ \left\vert f\left( X\right) -\mathbb{M}f\left( X\right)
\right\vert >C_{p,q}tLip_{p}\left( f\right) \left( n^{1/2-1/p}+n^{1/q}e^{t^2/(2q)} \right) \right\} \leq Ce^{-t^{2}/2}  \label{poly conc two}
\end{equation}%
where $C_{p,q}>0$ is a function of $\left( p,q\right) $.
\end{theorem}

\bigskip

\textbf{Comments for Theorem \ref{poly tails Lip}}

\bigskip

\noindent $\bullet$ \textit{Comment 1:} The following result is due to Barthe, Cattiaux and Roberto \cite[Example 5.3%
]{BaCatRo} (see Theorem 5.1 in their paper for a more general result).

\begin{theorem}
\label{BarCatRob}Let $\alpha>0$, let $X$ be a random vector in $\mathbb{R}^{n}$ with
probability density (with respect to Lebesgue measure) $d\mu /dx=\alpha
^{n}2^{-n}\prod_{i=1}^{n}\left( 1+\left\vert x_{i}\right\vert \right)
^{-1-\alpha }$, and let $f:\mathbb{R}^{n}\rightarrow \mathbb{R}$ satisfy $%
\left\vert f\left( x\right) -f\left( y\right) \right\vert \leq \left\vert
x-y\right\vert $ for all $x,y\in \mathbb{R}^{n}$. Then there exists $%
t_{0}\left( \alpha \right) >e$ and $C\left( \alpha \right) >0$ such that for
all $t\geq t_{0}\left( \alpha \right) $,%
\begin{equation*}
\mathbb{P}\left\{ \left\vert f\left( X\right) -\mathbb{M}f\left( X\right)
\right\vert >tn^{1/\alpha }\right\} \leq C\left( \alpha \right) \left( \frac{%
\log t}{t}\right) ^{\alpha }
\end{equation*}
\end{theorem}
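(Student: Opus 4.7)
The plan is to apply the paper's transport methodology: reduce to Gaussian concentration via $X\stackrel{d}{=}T(Z)$, then combine Pisier's inequality with order statistic tail bounds. Let $F$ be the common CDF, so $1-F(x) = 2^{-1}(1+|x|)^{-\alpha}$ for $x>0$, and set $T(x) = (F^{-1}(\Phi(x_i)))_{i=1}^n$. Writing $\psi = f\circ T$ reduces the problem to a deviation inequality for $\psi(Z)$ about $\mathbb{M}\psi(Z)$. Using the Mills ratio $1-\Phi(s)\asymp e^{-s^2/2}/s$ for large $s>0$ together with the formula $Lip(F^{-1}\circ\Phi,s) = \phi(s)/h(F^{-1}(\Phi(s)))$, a direct computation yields
\[
Lip(F^{-1}\circ\Phi, s) \leq C_\alpha(1+|s|)^{1+1/\alpha}e^{s^2/(2\alpha)}
\]
Since $f$ is $1$-Lipschitz, the chain rule gives $|\nabla\psi(Z)|^2 \leq \sum_{i=1}^n Y_i$ where $Y_i := Lip(F^{-1}\Phi,Z_i)^2$.

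The $Y_i$ are i.i.d.\ with polynomial tails of index $\alpha/2$ (up to logarithmic corrections), and their quantile function satisfies the fast-growth condition (\ref{condit fast grw}) with $p = \alpha/2$. I would apply Proposition \ref{final productiones} (with $j=0$ and $k=n-1$) to obtain
\[
\mathbb{P}\left\{|\nabla\psi(Z)| \leq C_\alpha n^{1/\alpha}e^{\lambda^2/(2\alpha)}(\log n+\lambda^2)^{1/(2\alpha)+1/2}\right\} \geq 1-Ce^{-\lambda^2/2}
\]
for all $\lambda\geq 2$. Critically, the order-statistic bound still applies when $\alpha\leq 2$ (where $\mathbb{E}Y_i=\infty$), because it controls partial sums of the largest order statistics using only the tail of $Y_i$, not its moments. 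This is the step where the paper's methodology truly leverages what is developed in Section \ref{SDKW aaa}.

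For the final step I would combine Pisier's inequality (\ref{Pisiers Gaussian}) with Proposition \ref{Gaussian v}. Pisier gives that $\psi(Z^*)-\psi(Z)$ is convexly majorized by $(\pi/2)|\nabla\psi(Z)||Z'|$. Using independence of $Z$ and $Z'$, together with the union bound $|Z'|\leq\lambda$ on an event of probability $\geq 1-Ce^{-\lambda^2/2}$, Proposition \ref{Gaussian v} applied with $Q(\lambda) \propto n^{1/\alpha}\lambda e^{\lambda^2/(2\alpha)}$ (incorporating the logarithmic corrections) and $p=\alpha$ produces
\[
\mathbb{P}\{|f(X)-\mathbb{M}f(X)| > C_\alpha n^{1/\alpha}\lambda e^{\lambda^2/(2\alpha)}(\log n +\lambda^2)^{1/(2\alpha)+1/2}\} \leq Ce^{-\lambda^2/2}
\]
(after the standard symmetrization converting a bound on $\psi(Z^*)-\psi(Z)$ into a median-centered bound for $\psi(Z)$). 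Setting $\tau := C_\alpha\lambda e^{\lambda^2/(2\alpha)}$ and inverting, one finds $\lambda^2\approx 2\alpha\log\tau$ for large $\tau$, so $e^{-\lambda^2/2}\leq C_\alpha \tau^{-\alpha}(\log\tau)^{\alpha/2}$, at least as strong as the $(\log\tau/\tau)^\alpha$ rate claimed in Theorem \ref{BarCatRob}.

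The main obstacle is careful bookkeeping of the logarithmic and polynomial prefactors through the chain of three reductions, especially for $\alpha\leq 2$, where $p=\alpha/2\leq 1$ makes the correction term $A$ in Proposition \ref{final productiones} more intricate. The polynomial prefactor $(1+|s|)^{1+1/\alpha}$ in $Lip(F^{-1}\Phi,s)$ contributes extra $(\log(n/k))$-type factors at each order statistic which must be absorbed into the final $\log\tau$ without altering the exponent $\alpha$. A secondary concern is that the tail comparison in Proposition \ref{Gaussian v} requires the growth rate (\ref{Q inc rate}) to hold with a universal constant $T$; this is where the precise form of $Q(\lambda)$ matters, but the essentially log-quadratic shape of our $Q$ makes this verification routine.
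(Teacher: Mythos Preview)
The paper does not prove Theorem~\ref{BarCatRob}; it is quoted from Barthe, Cattiaux, and Roberto \cite[Example 5.3]{BaCatRo}, who establish it via a weak Poincar\'e inequality. The paper states it only for comparison with its own Theorem~\ref{poly tails Lip}, so there is no ``paper's proof'' to match.

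Your strategy mirrors the paper's proof of Theorem~\ref{poly tails Lip}, and for $\alpha>1$ it is essentially sound, with one correction: condition~(\ref{Q inc rate}) with $Q(\lambda)\propto \lambda e^{\lambda^2/(2\alpha)}$ forces a strict inequality $p<\alpha$, not $p=\alpha$, because the polynomial prefactor $\lambda$ must be absorbed by genuine exponential decay. For $0<\alpha\leq 1$, however, there is a real obstruction that your proposal does not address. The right-hand side of Pisier's inequality~(\ref{Pisiers Gaussian}) involves $Y=(\pi/2)\left\vert\nabla\psi(Z)\right\vert Z'$, and already for $f(x)=x_1$ one has $\left\vert\nabla\psi(Z)\right\vert=Lip(F^{-1}\Phi,Z_1)$, whose tail decays like $t^{-\alpha}$; hence $\mathbb{E}\max\{0,Y\}=\infty$ whenever $\alpha\leq 1$. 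But every tail-comparison result in Section~\ref{removing phi} (Propositions~\ref{conditional bound}, \ref{maximal quantile}, \ref{Gaussian v}) requires $\mathbb{E}\max\{0,Y\}<\infty$, and Proposition~\ref{Gaussian v} additionally needs $p>1$, which together with $p<\alpha$ is vacuous for $\alpha\leq 1$. So the transport-plus-Pisier machinery, as developed here, cannot recover the full range $\alpha>0$; this is exactly why the paper cites the result rather than reproving it, and why its own Theorem~\ref{poly tails Lip} is stated only for $q>2$.

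A minor side remark: since $f$ is $1$-Lipschitz in $\ell_2$, one has $\sum_i f_i(TZ)^2\leq 1$, so in fact $\left\vert\nabla\psi(Z)\right\vert^2\leq\max_i Y_i$ rather than $\sum_i Y_i$. This would reduce the order-statistics step to a single extreme-value bound, though it does not rescue the case $\alpha\leq 1$.
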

Their bound can be written as
\begin{equation}
\mathbb{P}\left\{ \left\vert f\left( X\right) -\mathbb{M}f\left( X\right)
\right\vert >C_qt^2e^{t^2/(2q)}n^{1/q}\right\} \leq C_qe^{-t^2/2}\label{rewritiones}
\end{equation}
We have taken the liberty of including the $C_q$ in the deviation. For $t$ large enough so that
\[
te^{t^2/(2q)}> C_q n^{\frac{1}{2}-\frac{1}{p}-\frac{1}{q}}Lip_{p}\left( f\right)
\]
(\ref{poly conc two}) improves on (\ref{rewritiones}). Note that by the assumptions on $p$ and $q$, $\frac{1}{2}-\frac{1}{p}-\frac{1}{q}>0$ and $Lip_{p}(f)\leq Lip_{2}(f)$, which in their result is at most $1$. It is not unusual for $Lip_{p}(f)$ to be much smaller than $1$.

\bigskip

\noindent $\bullet$ \textit{Comment 2:} Whenever $f$ is linear, say $f(x)=\sum_1^n a_iX_i$ for $a\neq 0$, (\ref{poly conc one}) improves on (\ref{rewritiones}) for all $t$. In this case we may replace the basic assumption (\ref{Lip poly
cond}) with $\mathbb{P}\left\{ \left\vert X_{i}\right\vert >t\right\} \leq
c_{q}\left( 1+t\right) ^{-q}$. To do this, take an i.i.d. sequence $(Y_i)_1^n$ independent of $(X_i)_1^n$ with $\mathbb{P}\left\{ \left\vert Y_{i}\right\vert >t\right\} =\left( 1+t\right) ^{-q}$. One can show, as we did in the $3^{rd}$ arXiv version of \cite{Fr20IIa}, that both $(Y_i)_1^n$ and $(X_i+Y_i)_1^n$ satisfy (\ref{Lip poly cond}). Applying the result to $\sum a_i(X_i+Y_i)$ and to $\sum a_iY_i$ gives the result for $\sum a_iX_i$. However in the linear case, using a different approach, this can be improved for certain values of $t$ (see again \cite{Fr20IIa} and references therein).

\bigskip

\noindent $\bullet$ \textit{Comment 3:} Consider the special case of Theorem \ref{poly tails Lip} where $%
f(x)=n^{-1/2}\sum_{i=1}^{n}x_{i}$, and (\ref{poly conc one}) implies%
\begin{equation*}
\mathbb{P}\left\{ \left\vert f\left( X\right) -\mathbb{M}f\left( X\right)
\right\vert >C_{q}t\left( 1+n^{1/q-1/2}e^{t^2/(2q)}
\right) \right\} \leq C e^{-t^{2}/2}
\end{equation*}%
which is sub-Gaussian up to probability $Cn^{1-q/2}$ and matches the variance up to the factor $C_{q}$. Compare this with the following non-uniform version of the Berry-Esseen bound (see \cite[%
Chapter V \S 2-4 Theorems 3 and 13]{Pet75}).

\begin{theorem}
\label{non unif Berry Esseen}Let $r\in \left[ 3,\infty \right) $, and let $%
\left( X_{i}\right) _{1}^{n}$ be an i.i.d. sequence with $\mathbb{E}X_{1}=0$%
, $\mathbb{E}X_{1}^{2}=1$, and $\mathbb{E}\left\vert X_{1}\right\vert
^{r}<\infty $. Then for all $x\in \mathbb{R}$, 
\begin{equation*}
\left\vert \Phi (x)-\mathbb{P}\left\{ \frac{1}{\sqrt{n}}\sum_{i=1}^{n}X_{i}%
\leq x\right\} \right\vert \leq C_{r}\left( 1+\left\vert x\right\vert
\right) ^{-r}\left( n^{-1/2}\mathbb{E}\left\vert X_{1}\right\vert
^{3}+n^{-(r-2)/2}\mathbb{E}\left\vert X_{1}\right\vert ^{r}\right)
\end{equation*}%
\end{theorem}
Assuming that $\mathbb{E}\left\vert X_{1}\right\vert ^{r}<C_{r}$ and $%
n>C_{r} $, this gives a sub-Gaussian bound on $n^{-1/2}\sum X_i$ up to
probability $C_{r}\left( \log n\right) ^{-r/2}n^{-1/2}$. There is more that can be said here; the point is simply that as a nonlinear result Theorem \ref{poly tails Lip} does reasonably well in the linear case.

\bigskip

\underline{Notation and conventions}

$\mathbb{M}$ denotes median, $C$, $c$, $%
C^{\prime }$ etc. denote universal constants that may represent different values at each appearance, dependence on variables will usually be indicated by subscripts, $C_{q}$, $%
c_{q}$ etc. $\mathbb{N}$ denotes $\{1,2,3,...\}$. The term `sub-Gaussian bound' is used loosely to mean a bound where the probability is sub-Gaussian in terms of the deviation, at least for some values of $t$ (usually in a central region of the distribution). This differs from a sub-Gaussian random variable that would satisfy sub-Gaussian bounds for all $t$. Upper bounds (for probabilities) of the form $C\exp\left(-c_1t^2\right)$ can be replaced with $2\exp\left(-c_2t^2\right)$ by taking $c_2$ sufficiently small, and we do this without further explanation.

\section{\label{section concen ineq}Proofs}

\subsection{\label{outi}Outline of the general method}

We write $X_i=F_i^{-1}(\Phi(Z_i))$ where $Z=(Z_i)_1^n$ has the standard normal distribution in $\mathbb{R}^n$. The underlying probability space is of no relevence, and we may assume without loss of generality that such a $Z$ exists. Then $f(X)=\psi(Z)$ where $\psi(x)=f(F_i^{-1}(\Phi(Z_i))_1^n)$. We then apply Pisier's version of the Gaussian concentration inequality, see \cite{Pis0} and Proposition \ref{PisGau} below, to conclude that for all convex $\varphi:\mathbb{R}\rightarrow\mathbb{R}$,
\[
\mathbb{E}\varphi\left(\psi(Z)-\psi(Y)\right)\leq\mathbb{E}\varphi\left(Z'\left\vert \nabla \psi(Z)\right\vert \right)
\]
where $Y$ is an independent copy of $Z$ and $Z'$ is a random variable in $\mathbb{R}$ with the standard normal distribution independent of $Y$ and $Z$. By the theory of tail comparison inequalities under convex majorization, in particular by a result of Meilijson and N\'{a}das \cite{MeiNad} that is modified in \cite{Fr conv maj} to suite our purposes, see Proposition \ref{Gaussian vvv} below, we conclude that the quantiles of $\psi(Z)-\psi(Y)$ cannot be much larger than the quantiles of $Z'\left\vert \nabla \psi(Z)\right\vert$.

This brings us to the problem of estimating the quantiles of $Z'\left\vert \nabla \psi(Z)\right\vert$. The gradient is a sum of random variables and one can use the triangle inequality, H\"{o}lder's inequality or a different duality to write it in terms of a sum of independent random variables. We then apply Propositions \ref{tool sum order stat v1}, \ref{tool sum order stat v2} and \ref{Rr3} below to handle such sums.

\subsection{Small lemmas}

\begin{lemma}\label{no effect no}
Let $\delta>0$ and let $Q$ be a random variable taking values in $[0,\infty)$ such that for all $u,v\in[0,\infty)$,
\[
\mathbb{P}\left\{Q\geq uv\right\}\geq \delta e^{-v^2/4}\mathbb{P}\left\{Q\geq u\right\}
\]
Let $Z'$ be a random variable with the standard normal distribution in $\mathbb{R}$. Then for all $t>0$,
\[
\mathbb{P}\left\{\left\vert Z' \right\vert Q\geq t\right\}\leq C\delta^{-1}\mathbb{P}\left\{Q\geq ct\right\}
\]
\end{lemma}

\begin{proof}
\begin{eqnarray*}
\mathbb{P}\left\{\left\vert Z' \right\vert Q\geq t\right\}&=&\frac{2}{\sqrt{2\pi}}\int_0^\infty e^{-s^2/2}\mathbb{P}\left\{Q\geq \frac{t}{s}\right\}ds\leq\frac{2}{\sqrt{2\pi}}\int_0^\infty e^{-s^2/4}\delta^{-1}\mathbb{P}\left\{Q\geq t\right\}ds
\end{eqnarray*}
\end{proof}

The following lemma will be used implicitly several times.

\begin{lemma}
\label{lil alg}If $f,g:\left[ 0,\infty \right) \rightarrow \left[ 0,\infty
\right) $ are continuous strictly increasing functions with $f\left(
0\right) =g\left( 0\right) =0$, $t\in \left[ 0,\infty \right) $ and $s=\max
\left\{ f\left( t\right) ,g\left( t\right) \right\} $ then $t=\min \left\{
f^{-1}\left( s\right) ,g^{-1}\left( s\right) \right\} $.
\end{lemma}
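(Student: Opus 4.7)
The lemma is essentially a one-line observation once one unwinds the symmetry, so my plan is direct.

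First I would exploit the symmetry between $f$ and $g$ to reduce to the case $f(t) \geq g(t)$; then $s = f(t)$ by definition. Since $f$ is continuous and strictly increasing on $[0,\infty)$ with $f(0)=0$, its range is an interval of the form $[0,L)$ with $s = f(t)$ in this range, so $f^{-1}$ is defined at $s$ and $f^{-1}(s) = t$ by injectivity.

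Next I would compare $g^{-1}(s)$ to $t$. Since $g(t) \leq f(t) = s$ lies in the range of $g$ (being below $g(t)$ which is in the range), $g^{-1}(s)$ is well-defined, and by strict monotonicity of $g^{-1}$ we have $g^{-1}(s) \geq g^{-1}(g(t)) = t$. Therefore $\min\{f^{-1}(s), g^{-1}(s)\} = \min\{t, g^{-1}(s)\} = t$, which is the desired conclusion.

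There is no real obstacle here; the only point of care is to confirm that $s$ actually lies in the range of both $f^{-1}$ and $g^{-1}$ so that the expressions in the statement make sense. This is immediate from $g(t) \leq s = f(t)$ combined with the hypothesis that $f,g$ are continuous and strictly increasing from $0$, which guarantees that each maps $[0,\infty)$ bijectively onto a subinterval $[0,L_f)$ or $[0,L_g)$ of $[0,\infty)$, both containing $s$.
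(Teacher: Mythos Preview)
The paper states this lemma without proof, so there is nothing to compare against; your direct argument by symmetry is the natural one and the core of it is correct.

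There is, however, a slip in your well-definedness discussion. You write that $s$ ``lies in the range of $g$ (being below $g(t)$ which is in the range)'', but in fact $s\geq g(t)$, not below, and from $g(t)\leq s$ one cannot conclude that $s$ lies in the range of $g$. Concretely, take $f(t)=t$ and $g(t)=1-e^{-t}$: at $t=2$ one has $s=2$, yet $g^{-1}(2)$ is undefined since the range of $g$ is $[0,1)$. The lemma as stated in the paper tacitly assumes either that both functions are surjective onto $[0,\infty)$ (which is the case in every application later in the paper), or that one adopts the convention $g^{-1}(s)=+\infty$ when $s$ is at least the supremum of $g$. With either reading your argument goes through cleanly: if $g^{-1}(s)$ is finite then $g(t)\leq s$ gives $t\leq g^{-1}(s)$ by monotonicity of $g^{-1}$, and if $g^{-1}(s)=+\infty$ the inequality $t\leq g^{-1}(s)$ is trivial. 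Just replace the garbled sentence with this observation and the proof is complete.
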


\subsection{\label{toolkit}Tools from \cite{Fr20IIa}, \cite{Fr conv maj} and \cite{Pis0}}

\begin{proposition}\label{tool sum order stat v1}(see proof of \cite[Lemma 8]{Fr20IIa})
Let $n\in \mathbb{N}$, $\lambda \in \left[ 2,\infty \right) $,
and let $(Y_{i})_{1}^{n}$
be an i.i.d. sequence of non-negative random variables, each with cumulative
distribution $F$, quantile function $F^{-1}$, and corresponding order statistics $\left( Y_{(i)}\right)
_{1}^{n}$. With probability at least $1-3^{-1}\pi ^{2}\exp \left( -\lambda
^{2}/2\right) $, the following event holds: for all $j,k\in\mathbb{Z}$ with $0\leq j\leq k<n$,
\begin{eqnarray*}
\sum_{i=n-k}^{n-j}Y_{(i)}&\leq&F^{-1}\left(
1-\frac{j+1}{n+1}e^{-1}\exp \left( \frac{-\lambda ^{2}-4\log \left(
j+1\right) }{2\left(j+1\right)}\right) \right)\\
&+&(n+1)\int_{(j+1)/\left( n+1\right) }^{(k+1)/\left( n+1\right) }F^{-1}\left(
1-e^{-1-2/e}t\exp \left( \frac{-\lambda ^{2}}{2\left( n+1\right) t}\right)
\right) dt
\end{eqnarray*}
\end{proposition}

\begin{proof}[Proof sketch]
This is based on concentration of order statistics from the uniform distribution on $(0,1)$ which in turn is based on estimates for the binomial distribution. One then transforms the order statistics in $(0,1)$ to the order statistics $(Y_{(i)})_1^n$ using the quantile function. One then peels off the largest term and bounds $\sum_{i=n-k}^{n-j-1}Y_{(i)}$ above by a deterministic sum (with high probability) and compares the deterministic sum to an integral.
\end{proof}

\begin{proposition}\label{tool sum order stat v2}(\cite[Proposition 10]{Fr20IIa})
Consider the setting and assumptions of Proposition \ref{tool sum order stat v1}, and assume in addition that there exists $p>0$ and $T\geq 1$ such that for all $\delta,x\in(0,1)$,
\begin{equation}
H^*(\delta x)\geq T^{-1} \delta^{-1/p}H^*(x)\label{condit fast grw}
\end{equation}
where $H^*(x)=F^{-1}(1-x)$. With probability at least $1-3^{-1}\pi ^{2}\exp \left( -\lambda
^{2}/2\right) $, the following event holds: for all $j,k\in\mathbb{Z}$ with $0\leq j\leq k<n$, $\sum_{i=n-k}^{n-j}Y_{(i)}$ is bounded above by
\begin{eqnarray*}
\left[1+T\lambda^2A\right]H^*\left(e^{-1-2/e}\frac{j+1}{n+1}\exp\left(\frac{-\lambda^2}{2(j+1)}\right)\right)+Cn\int_{\frac{j+1}{n+1}\exp\left(\frac{-\lambda^2}{2(j+1)}-1-2/e\right)}^{\frac{k+1}{n+1}\exp\left(\frac{-\lambda^2}{2(k+1)}-1-2/e\right)}H^*(x)dx
\end{eqnarray*}
where $A=0$ if $\lambda^2/2\leq j+1$ and $A$ equals
\begin{eqnarray*}
&&C^{1+1/p}\min\left\{p,\lambda^2\left(\frac{1}{j+1}-\frac{1}{\min\left\{\lambda^2/2,k+1\right\}}\right)\right\}\left(p+1+\frac{\lambda^2}{j+1}\right)^{-2}\\
&&+C\min\left\{1,\log\frac{\min\left\{k+1,\lambda^2/2\right\}}{j+1}\right\}\left[\frac{\lambda^2}{2(j+1)}\exp\left(\frac{\lambda^2}{2(j+1)}\right)\right]^{-1/p}\left[1+\frac{\lambda^2}{k+1}\right]^{-1}
\end{eqnarray*}
if $\lambda^2/2>j+1$.
\end{proposition}

\begin{proof}[Proof sketch]
One simplifies the integral in Proposition \ref{tool sum order stat v1} by making a change of variables and proving various expressions for the resulting integrals that arise. While the details are at times mildly tedious, the ingredients are mostly standard and idea is simple.
\end{proof}

Let $(W_i)_1^n$ be an i.i.d. sequence of non-negative random variables such that for all $t>0$,
\[
\mathbb{P}\left\{W_i >t\right\}=e^{q/2}(e+t)^{-q/2}\left(\ln (e+t)\right)^{q/2}
\]

\begin{proposition}\label{Rr3}(\cite[Proposition 21]{Fr20IIa})
For all $b\in[0,\infty)^n$ and all $t>0$, with probability at least $1-Ce^{-t^2/2}$,
\begin{equation*}\label{Rr3 linear combo}
\sum_{i=1}^nb_iW_i\leq C_q\left(\left\vert b\right\vert _1+t^2e^{t^2/q}\left\vert b\right\vert_{q/2}\right)
\end{equation*}
\end{proposition}

\begin{proof}[Proof sketch]
One defines a norm on $\mathbb{R}^n$, $[\cdot]_{\delta,W}$, which essentially measures the quantiles of $\sum_1^n\left\vert x_i\right\vert W_i$. One estimates $[x]_{\delta,W}$ for all $x\in\{0,1\}^n$, and by a duality argument (expressing this norm as the dual norm of its dual norm) one gets a bound on $[x]_{\delta,W}$ for all $x\in\mathbb{R}^n$. This estimate is good for most directions, but for certain directions becomes crude. One then compares $\sum_1^n\left\vert x_i\right\vert W_i$ to $\sum_1^n\left\vert x_{I(i)}\right\vert W_i$, where $(I(i))_1^n$ is an i.i.d. sequence of random integers in $\{1, 2, \cdots ,n\}$. The advantage of this is that the latter sum is a sum of i.i.d. random variables, so one can use Proposition \ref{tool sum order stat v2}. This second approach also works for most coefficient sequences and becomes crude in some cases. When we combine the first approach, which is more geometric, with the second approach, which is more combinatorial, we end up with Proposition \ref{Rr3}.
\end{proof}

\begin{proposition}\label{PisGau}(Pisier \cite{Pis0})
Let $Y$ and $Z$ be independent random vectors in $\mathbb{R}^n$ each with the standard normal distribution, and let $Z'$ be a random variable with the standard normal distribution in $\mathbb{R}$ independent of $Z$. Let $\psi:\mathbb{R}^n\rightarrow\mathbb{R}$ be differentiable on $\mathbb{R}$. For all convex functions $\varphi:\mathbb{R}\rightarrow\mathbb{R}$,
\begin{equation}
\mathbb{E}\varphi \left( \psi\left( Z^*\right) -\psi\left( Z\right) \right) \leq 
\mathbb{E}\varphi \left( \frac{\pi }{2}\left\vert \nabla \psi\left( Z\right)
\right\vert Z'\right)   \label{Pisiers Gaussian}
\end{equation}%
\end{proposition}

\begin{proof}[Proof sketch]
This follows by an elementary analysis using Jensen's inequality of the corresponding path integral from $Y$ to $Z$, for a carefully chosen path.
\end{proof}

\begin{proposition}\label{Gaussian vvv}(\cite[Corollary 6]{Fr conv maj})
Let $X$ and $Y$ be real valued random variables such that $\mathbb{E}\max\{0,Y\}<\infty$ and for all $\alpha\in\mathbb{R}$, $\mathbb{E}\max\{0,X-\alpha\}\leq\mathbb{E}\max\{0,Y-\alpha\}$. Suppose that $T,\gamma\geq 1$, $p>1$, and $Q:(0,\infty)\rightarrow(0,\infty)$ is a function that satisfies
\begin{equation*}
Q(t)\exp\left(\frac{-t^2}{2p}\right)\leq TQ(s)\exp\left(\frac{-s^2}{2p}\right)\label{Q inc rate}
\end{equation*}
for all $0<s<t$. If $\mathbb{P}\left\{Y>Q(t)\right\}< \gamma\exp\left(-t^2/2\right)$ for all $t>0$, then for all $t>0$
\begin{equation*}
\mathbb{P}\left\{X>\frac{pT}{p-1}Q(t)\right\}\leq \gamma\exp\left(-t^2/2\right)\label{gaussian v bound X}
\end{equation*}
\end{proposition}

\begin{proof}[Proof sketch]
This is based on the result of Meilijson and N\'{a}das \cite{MeiNad}, that if $X$ and $Y$ are real valued random variables with $\mathbb{E}\max\{0,Y\}<\infty$ and $\mathbb{E}\varphi(X)\leq\mathbb{E}\varphi(Y)$ for all non-decreasing convex $\varphi:\mathbb{R}\rightarrow[0,\infty)$, then for all $s\in\mathbb{R}$ with $\mathbb{P}\{Y>s\}\neq 0$, $\mathbb{P}\{X\geq\mathbb{E}(Y|Y>s)\}\leq\mathbb{P}\{Y>s\}$. Unless $Y$ has very thick tails, $\mathbb{E}(Y|Y>s)$ is the same order of magnitude as $s$; this is quantified in the function $Q$.
\end{proof}

\subsection{\label{Waii}Proof of Theorem \protect\ref{conc sum weibull type}}

Without loss of generality we may assume that each $F_{i}^{-1}$ is
differentiable. Let $Tx=\left( \left( F_{i}^{-1}\Phi (x_{i})\right)
_{i=1}^{n}\right) $, and let $Z$ be a random vector in $\mathbb{R}^{n}$ with
the standard normal distribution. $f_i$ denotes the partial derivative of $f$ with respect to the $i^{th}$ coordinate. By the assumed bound on $Lip\left( F_{i}^{-1}\Phi
,Z_{i}\right)$ and the triangle inequality,
\begin{eqnarray}
\left\vert \nabla (f\circ T)(Z)\right\vert &=&\left(
\sum_{i=1}^{n}f_{i}\left( TZ\right) ^{2}Lip\left( F_{i}^{-1}\Phi
,Z_{i}\right) ^{2}\right) ^{1/2}\nonumber\\
&\leq& \left( \sum_{i=1}^{n}f_{i}\left(
TZ\right) ^{2}\left( 1+\left\vert Z_{i}\right\vert \right) ^{-2+4/q}\right)
^{1/2} \label{grad expressi}\\
&\leq &\left( \sum_{i=1}^{n}\sup_{x\in \mathbb{R}^{n}}\left\vert f_{i}\left(
x\right) \right\vert ^{2}\mathbb{E}\left( 1+\left\vert Z_{i}\right\vert
\right) ^{-2+4/q}\right) ^{1/2} \nonumber\\
&&+\left\vert \sum_{i=1}^{n}\sup_{x\in \mathbb{R}^{n}}\left\vert f_{i}\left(
x\right) \right\vert ^{2}\left\{ \left( 1+\left\vert Z_{i}\right\vert
\right) ^{-2+4/q}-\mathbb{E}\left( 1+\left\vert Z_{i}\right\vert \right)
^{-2+4/q}\right\} \right\vert ^{1/2}\nonumber
\end{eqnarray}%
Since $0<q<1$, for all $t>0$,%
\begin{equation*}
\mathbb{P}\left\{ \left\vert \left( 1+\left\vert Z_{i}\right\vert \right)
^{-2+4/q}-\mathbb{E}\left( 1+\left\vert Z_{i}\right\vert \right)
^{-2+4/q}\right\vert >t\right\} \leq 2\exp \left( -c_{q}t^{q/(2-q)}\right)
\end{equation*}%
Noting that $0<2/(2-q)<1$ and using (\ref{Weibul dist for 0<q<=1}), with
probability at least $1-2\exp \left( -t^{2}/2\right) $,%
\begin{eqnarray*}
&&\left\vert \sum_{i=1}^{n}\sup_{x\in \mathbb{R}^{n}}\left\vert f_{i}\left(
x\right) \right\vert ^{2}\left\{ \left( 1+\left\vert Z_{i}\right\vert
\right) ^{-2+4/q}-\mathbb{E}\left( 1+\left\vert Z_{i}\right\vert \right)
^{-2+4/q}\right\} \right\vert ^{1/2} \\
&\leq &C_{q}\left( t^{1/2}Lip_{4}^{\sharp }\left( f\right)
+t^{(2-q)/q}Lip_{\infty }^{\sharp }\left( f\right) \right)
\end{eqnarray*}%
When adding this to the remaining term of $C_{q}Lip_{2}^{\sharp }\left(
f\right) $, the term involving $Lip_{4}^{\sharp }\left( f\right) $ can be
erased since by H\"{o}lder's inequality $\left\vert \cdot \right\vert
_{4}\leq \left\vert \cdot \right\vert _{2}^{1/2}\left\vert \cdot \right\vert
_{\infty }^{1/2}$ and since we may assume that $t\geq 1$, $t^{1/2}\leq
\left( 1\right) ^{1/2}\left( t^{(2-q)/q}\right) ^{1/2}$. What we have shown
is that with probability at least $1-2\exp \left( -t^{2}/2\right) $,%
\begin{equation*}
\left\vert \nabla (f\circ T)(Z)\right\vert \leq C_{q}Lip_{2}^{\sharp }\left(
f\right) +C_{q}t^{(2-q)/q}Lip_{\infty }^{\sharp }\left( f\right)
\end{equation*}
and therefore with probability at least $1-C\exp \left( -t^{2}/2\right)$,
\begin{equation*}
\left\vert Z'\right\vert\cdot\left\vert \nabla (f\circ T)(Z)\right\vert \leq C_{q}tLip_{2}^{\sharp }\left(
f\right) +C_{q}t^{2/q}Lip_{\infty }^{\sharp }\left( f\right)
\end{equation*}
As in Section \ref{outi}, $Y$ is an independent copy of $Z$, and $Z'$ is a real valued variable with the standard normal distribution, independent of $Y$ and $Z$. It now follows from Pisier's version of the Gaussian concentration inequality that for all non-decreasing convex $\varphi:\mathbb{R}\rightarrow\mathbb{R}$,
\[
\mathbb{E}\varphi\left((f\circ T)(Z)-(f\circ T)(Y)\right)\leq\mathbb{E}\varphi\left(Z'\left\vert \nabla (f\circ T)(Z)\right\vert \right)\leq\mathbb{E}\varphi\left(\left\vert Z'\right\vert \cdot\left\vert \nabla (f\circ T)(Z)\right\vert \right)
\]
By \cite[Proposition 6]{Fr conv maj}, the quantiles of $\left(f\circ T\right)(Z)-\left(f\circ T\right)(Y)$ cannot be much larger than those of $\left\vert Z' \right\vert \cdot\left\vert \nabla \left( f\circ T\right) \left( Z\right) \right\vert$ (this should not be surprising, even if the reader is not familiar with \cite{Fr conv maj}). More precisely, with probability at least $C\exp \left( -t^{2}/2\right) $,
\[
\left\vert \left(f\circ T\right)(Z)-\left(f\circ T\right)(Y) \right\vert \leq  C_{q}tLip_{2}^{\sharp }\left(
f\right) +C_{q}t^{2/q}Lip_{\infty }^{\sharp }\left( f\right)
\]
Here we are also using symmetry of the distribution of $\left(f\circ T\right)(Z)-\left(f\circ T\right)(Y)$. It is well known, and an easy computation, that concentration about an independent random point implies concentration about the median by changing the constants involved (this is a more robust phenomenon than a comparison to concentration about the mean). This proves (\ref{weibull variation}).

For $r\geq 1$ consider the following norm on $\mathbb{R}%
^{n}$,%
\begin{equation*}
\left\vert x\right\vert _{B_{1}^{n}\cap r^{-1}B_{\infty }^{n}}=\max \left\{
\left\vert x\right\vert _{1},r\left\vert x\right\vert _{\infty }\right\}
\end{equation*}%
After a brief consideration we see that the dual norm is given by%
\begin{eqnarray*}
\left\vert y\right\vert _{conv\left( B_{\infty }^{n},rB_{1}^{n}\right)
}&=&\max \left\{ \sum_{i=1}^{n}x_{i}y_{i}:\left\vert x\right\vert _{1}\leq
1,\left\vert x\right\vert _{\infty }\leq r^{-1}\right\}\\
&\leq&r^{-1}\sum_{i=1}^{\min \left\{ \left\lceil r\right\rceil ,n\right\}
}y^{(i)}\leq 2\left\vert y\right\vert _{conv\left( B_{\infty
}^{n},rB_{1}^{n}\right) }
\end{eqnarray*}%
where $y^{(1)}\geq y^{(2)}\ldots $ is the non-increasing rearrangement of $%
\left( \left\vert y_{i}\right\vert \right) _{1}^{n}$. Set $r=t^{-2+4/q}$, $k=\left\lceil r\right\rceil $ and $Y_i=\left(1+\left\vert Z_i \right\vert\right)^{-2+4/q}$. Each $Y_i$ has quantile function $H^*(1-x)$, where
\[
H^*(x)\leq C_q\left(1+\log \frac{1}{x}\right)^{-1+2/q}
\]
By Proposition \ref{tool sum order stat v1}, with probability at least $1-C\exp \left(
-t^{2}/2\right) $, (assuming first that $t^{-2+4/q}\leq c_{q}n$),%
\begin{eqnarray*}
\sum_{i=1}^kY^{(i)}=\sum_{i=n-k+1}^nY_{(i)}\leq H^*\left(\frac{c}{n}\exp\left(\frac{-t^2}{2}\right)\right)+Cn\int_{1/(n+1)}^{k/(n+1)}H^*\left(cx\exp\left(\frac{-t^2}{2(n+1)x}\right)\right)dx
\end{eqnarray*}
\begin{eqnarray*}
&\leq&C_q\left(t^2+\log n\right)^{-1+2/q}+C_qn\frac{k-1}{n+1}+C_qn\int_{1/(n+1)}^{k/(n+1)}\left(\log\frac{1}{x}\right)^{-1+2/q}dx\\
&+&C_qt^{-2+4/q}n^{2-2/q}\int_{1/(n+1)}^{k/(n+1)}x^{1-2/q}dx\\
&\leq& C_qt^{-2+4/q}+C_q\left(\log n\right)^{-1+2/q}+C_qk+C_qk\left(\log\frac{n+1}{k}\right)^{-1+2/q}+C_qt^{-2+4/q}k^{2-2/q}
\end{eqnarray*}
Recalling the relationships between $t$, $k$ and $n$, and the fact that $3-2/q<1$, this is bounded above by
\[
C_qk\left(\log\frac{n+1}{k}\right)^{-1+2/q}
\]
This implies
\begin{equation*}
\left\vert \left( \left( 1+\left\vert Z_{i}\right\vert \right)
^{-2+4/q}\right) _{1}^{n}\right\vert _{conv\left( B_{\infty
}^{n},rB_{1}^{n}\right) }\leq C_{q}\left( \log \frac{n+1}{k}\right)
^{-1+2/q}
\end{equation*}%
For $t^{-2+4/q}>c_{q}n$, it follows by classical Gaussian concentration of
the $\ell _{-2+4/q}^{n}$ norm (which is $1$-Lipschitz since $-2+4/q\geq 2$)
that with probability at least $1-C\exp \left( -t^{2}/2\right) $, 
\begin{equation*}
\left\vert \left( \left( 1+\left\vert Z_{i}\right\vert \right)
^{-2+4/q}\right) _{1}^{n}\right\vert _{conv\left( B_{\infty
}^{n},rB_{1}^{n}\right) }\leq C_{q}n^{-1}\sum_{i=1}^{n}\left( 1+\left\vert
Z_{i}\right\vert \right) ^{-2+4/q}\leq C_{q}+C_{q}t^{-2+4/q}n^{-1}\leq C_{q}
\end{equation*}%
In either case,
\[
\left\vert \left( \left( 1+\left\vert Z_{i}\right\vert \right)
^{-2+4/q}\right) _{1}^{n}\right\vert _{conv\left( B_{\infty
}^{n},rB_{1}^{n}\right) }\leq C_{q}\left( \log \left(e+ \frac{n}{t^{-2+4/q}}\right)\right)
^{-1+2/q}
\]
Applying duality to the sum in (\ref{grad expressi}), 
\begin{equation*}
\left\vert \nabla \left( f\circ T\right) \left( Z\right) \right\vert
^{2}\leq \left\vert \left( f_{i}\left( TZ\right) ^{2}\right)
_{1}^{n}\right\vert _{B_{1}^{n}\cap r^{-1}B_{\infty }^{n}}\left\vert \left(
\left( 1+\left\vert Z_{i}\right\vert \right) ^{-2+4/q}\right)
_{1}^{n}\right\vert _{conv\left( B_{\infty }^{n},rB_{1}^{n}\right) }
\end{equation*}%
so
\[
\left\vert \nabla \left( f\circ T\right) \left( Z\right) \right\vert\leq C_q\left( \log \left(e+ \frac{n}{t^{-2+4/q}}\right)\right)
^{1/q-1/2}\left(Lip_{2}\left(
f\right) +t^{(2-q)/q}Lip_{\infty }\left( f\right)\right)
\]
(\ref{nonlinear Weibull 2}) now follows, as earlier in the proof, from Pisier's version of Gaussian concentration and applying tail comparison under convex majorization.

\subsection{Proof of Theorem \protect\ref{poly tails Lip}}

Without loss of generality we may assume that each $F_{i}^{-1}$ is
differentiable, and that $Lip_{2}^{\sharp }\left( f\right) =1$. Let $%
Tx=\left( F_{i}^{-1}\Phi (x_{i})\right) _{i=1}^{n}$ and let $Z$ be a random
vector in $\mathbb{R}^{n}$ with the standard normal distribution. Using the estimate
\[
\frac{\phi(x)}{\min\left\{\Phi(x),1-\Phi(x)\right\}}\leq C\left(1+\left\vert x \right\vert\right)\leq C\sqrt{\log \min \left\{ \Phi \left( x\right)
,1-\Phi \left( x\right) \right\} ^{-1}}
\]
valid for all $x\in\mathbb{R}$, which follows by integrating the tails of $\phi$ using log-concavity, and using
\begin{eqnarray*}
Lip\left(F_i^{-1}\circ\Phi,x\right)&\leq& Lip\left(F_i^{-1},\Phi(x)\right)\cdot Lip\left(\Phi,x\right)\\
&\leq&\min\left\{\Phi(x),1-\Phi(x)\right\}^{-1/q}\frac{\phi(x)}{\min\left\{\Phi(x),1-\Phi(x)\right\}}
\end{eqnarray*}
we see that
\begin{equation*}
\left\vert \nabla \left( f\circ T\right) \left( Z\right) \right\vert =\left(
\sum_{i=1}^{n}f_{i}\left( TZ\right) ^{2}Lip\left( F_{i}^{-1}\Phi
,Z_{i}\right) ^{2}\right) ^{1/2}\leq \left( \sum_{i=1}^{n}f_{i,\sharp
}^{2}U_{i}\right) ^{1/2}
\end{equation*}%
where%
\begin{equation}
f_{i,\sharp }=\sup_{x\in \mathbb{R}^{n}}\left\vert f_{i}(x)\right\vert 
\hspace{0.45in}U_{i}=C\min \left\{ \Phi \left( Z_{i}\right) ,1-\Phi \left(
Z_{i}\right) \right\} ^{-2/q}\log \min \left\{ \Phi \left( Z_{i}\right)
,1-\Phi \left( Z_{i}\right) \right\} ^{-1}  \label{aanbd}
\end{equation}%
It follows from the definition that each $U_{i}$ has quantile function%
\begin{equation}
G^{-1}(t)=\left( \frac{1-t}{2}\right) ^{-2/q}\log \left( \frac{1-t}{2}%
\right) ^{-1}:0<t<1  \label{aanber}
\end{equation}%
By Proposition \ref{Rr3},
\begin{equation*}
\mathbb{P}\left\{ \sum_{i=1}^{n}f_{i,\sharp }^{2}U_{i}>C_{q}\sum_{i=1}^{n}f_{i,\sharp }^{2}+C_qt^2e^{t^2/q}\left(\sum_{i=1}^{n}f_{i,\sharp }^{q}\right)^{q/2}\right\} \leq Ce^{-t^2/2}
\end{equation*}%
Recalling the expression for $\left\vert \nabla \left( f\circ T\right) \left( Z\right) \right\vert$ in terms of the $U_i$: with probability at least $1-Ce^{-t^{2}/2}$,
\begin{equation*}
\left\vert \nabla \left( f\circ T\right) \left( Z\right) \right\vert \leq
C_{q}\left( Lip_{2}^{\sharp }\left( f\right) +te^{t^2/(2q)} Lip_{q}^{\sharp }\left( f\right)\right)
\end{equation*}%
This means we can write
\[
\left\vert \nabla \left( f\circ T\right) \left( Z\right) \right\vert =C_{q} Lip_{2}^{\sharp }\left( f\right) +C_{q}G Lip_{q}^{\sharp }\left( f\right)
\]
where $G$ is a random variable with heavy tails (polynomial type heaviness). Upon multiplication, a random variable with sufficiently heavy tails absorbs a standard normal random variable, in that the quantiles of the product are the same order of magnitude. This is made precise, for example, in Lemma \ref{no effect no}. If $Z'$ is a standard normal random variable in $\mathbb{R}$ independent of $Z$, then this observation means that with probability at least $1-Ce^{-t^{2}/2}$,
\[
\left\vert Z' \right\vert C_{q} Lip_{2}^{\sharp }\left( f\right)\leq C_{q} tLip_{2}^{\sharp }\left( f\right)\hspace{1cm}\text{and}\hspace{1cm}\left\vert Z' \right\vert C_{q}G Lip_{q}^{\sharp }\left( f\right)\leq C_{q}te^{t^2/(2q)} Lip_{q}^{\sharp }\left( f\right)
\]
so
\begin{equation*}
\left\vert Z' \right\vert\cdot\left\vert \nabla \left( f\circ T\right) \left( Z\right) \right\vert \leq
C_{q}t\left( Lip_{2}^{\sharp }\left( f\right) +e^{t^2/(2q)} Lip_{q}^{\sharp }\left( f\right)\right)
\end{equation*}%
Eq. (\ref{poly conc one}) now follows, as in the proof of Theorem \ref{conc sum weibull type}, from Gaussian concentration and tail comparison under convex majorization. Here one uses Proposition \ref{Gaussian vvv} with, say, $T=C_q$ and $p=1.5$. We now consider Eq. (\ref{poly conc
two}). No longer assuming that $Lip_{2}^{\sharp }\left( f\right) =1$, using H%
\"{o}lder's inequality for $\ell _{p/2}$ and $\ell _{p/(p-2)}$,%
\begin{equation*}
\left\vert \nabla \left( f\circ T\right) \left( Z\right) \right\vert \leq
Lip_{p}\left( f\right) \left( \sum_{i=1}^{n}U_{i}^{p/(p-2)}\right)
^{(p-2)/(2p)}
\end{equation*}%
By Proposition \ref{tool sum order stat v2}, with probability at
least $1-C\exp \left( -t^{2}/2\right) $,%
\begin{equation*}
\left( \sum_{i=1}^{n}U_{i}^{p/(p-2)}\right) ^{(p-2)/(2p)}\leq C_{p,q}\left(
n^{1/2-1/p}+n^{1/q}te^{t^2/(2q)} \right)
\end{equation*}%
The result now follows, as before, by absorbing a Gaussian into a heavy tailed random variable, and applying Gaussian concentration and convex majorization.

\section*{Acknowledgements}

Thanks to Gusti van Zyl and the anonymous referee for comments and suggestions. Part of this work was done while the author was a postdoctoral
fellow at the Weizmann Institute of Science.

\end{document}